\newcommand{\monthyear}[1]{%
  \def\@monthyear{\uppercase{#1}}}
\newcommand{\volnumber}[1]{%
  \def\@volnumber{\uppercase{#1}}}
\def\ps@plain{\ps@empty
  \def\@oddfoot{\@monthyear \hfil \thepage}%
  \def\@evenfoot{\thepage \hfil \@volnumber}}
\def\ps@firstpage{\ps@plain}
\def\ps@headings{\ps@empty
  \def\@evenhead{%
    \setTrue{runhead}%
    \def\thanks{\protect\thanks@warning}%
    \uppercase{The Fibonacci Quarterly}\hfil}%
  \def\@oddhead{%
    \setTrue{runhead}%
    \def\thanks{\protect\thanks@warning}%
    \hfill\uppercase{A new combinatorial interpretation of $F_n^2$}}%
  \let\@mkboth\markboth
  \def\@evenfoot{%
    \thepage \hfil \@volnumber}%
  \def\@oddfoot{%
    \@monthyear \hfil \thepage}%
  }%
\theoremstyle{plain}
\numberwithin{equation}{section}
\newtheorem{thm}{Theorem}[section]
\newtheorem{lemma}[thm]{Lemma}
\newtheorem{idn}[thm]{Identity}
\newcommand{\floor}[1]{\lfloor#1\rfloor} 
\begin{document}
\bibliographystyle{fq} 

\monthyear{Month Year}
\volnumber{Volume, Number}
\setcounter{page}{1}

\title{A new combinatorial interpretation of the Fibonacci numbers squared}
\author{Kenneth Edwards}
\author{Michael A. Allen*} 
\address{Physics Department, Faculty of Science, 
Mahidol University, Rama 6 Road, Bangkok
10400 Thailand}

\email{kenneth.edw@mahidol.ac.th} 
\email{maa5652@gmail.com \textrm{(*corresponding author)}}

\begin{abstract}
We consider the tiling of an $n$-board (a $1\times n$ array of square
cells of unit width) with half-squares ($\frac12\times1$ tiles) and
$(\frac12,\frac12)$-fence tiles. A $(\frac12,\frac12)$-fence tile is
composed of two half-squares separated by a gap of width $\frac12$. We
show that the number of ways to tile an $n$-board using these 
types of tiles equals $F_{n+1}^2$ where $F_n$ is the $n$th Fibonacci
number. We use these tilings to devise combinatorial proofs of
identities relating the Fibonacci numbers squared to one another and
to other number sequences. Some of these identities appear to be new.
\end{abstract}

\maketitle

\section{Introduction}
The Fibonacci numbers can be interpreted as the number of ways to tile
an $n\times1$ array of joined $1\times1$ cells 
(called an $n$-board) with $1\times1$ squares
and $2\times1$ dominoes \cite{BCCG96,BQ=03}. More generally, the
number of ways to tile an $n$-board with all the $r\times1$
$r$-ominoes up to $r=k$ is the $k$-step (or $k$-generalized) Fibonacci
number
$F^{(k)}_{n+1}=F^{(k)}_{n}+F^{(k)}_{n-1}+\cdots+F^{(k)}_{n-k+1}$, with
$F^{(k)}_1=1$ and $F^{(k)}_{n<1}=0$ \cite{BQ=03}.  
In \cite{Edw08} it was shown that it is possible to obtain a
combinatorial interpretation of the Tribonacci numbers (the 3-step
Fibonacci numbers) as the number of tilings of an $n$-board using just
two types of tiles, namely, squares and $(\frac12,1)$-fence tiles.  A
$(w,g)$-fence tile is composed of two pieces (called posts) of size
$w\times1$ separated by a gap of size $g\times1$. In \cite{EA15} a
bijection between tiling with $(\frac12,g)$-fence tiles where
$g\in\{0,1,2,\ldots\}$ and strongly restricted permutations was
identified and then used to obtain results concerning the permutations
in a straightforward way.

Here we show that the number of ways to tile an $n$-board using
half-squares ($\frac12\times1$ tiles)
and $(\frac12,\frac12)$-fences is a Fibonacci number squared. The
Fibonacci numbers $F_n$ are given by
\begin{equation}\label{e:fib}
F_{n}=F_{n-1}+F_{n-2}, \quad F_0=0, \; F_1=1,\quad n\geq2.
\end{equation}  
We use these tilings to formulate combinatorial proofs in the style of
those found in \cite{BQ=03}
of a number of
identities relating to the Fibonacci numbers squared.

\section{Types of metatile}
When tiling a board with fences one must first determine the types of
metatile since any tiling of the board can be expressed as a tiling
using metatiles~\cite{Edw08}.  A metatile is a minimal arrangement of
tiles that exactly covers an integral number of adjacent
cells~\cite{Edw08,EA15}.  When tiling with half-squares (denoted by
$h$) and $(\frac12,\frac12)$-fence tiles (henceforth referred to simply as
fences and denoted by $f$), the simplest metatile is two half-squares
($h^2$) as illustrated in Fig.~\ref{f:metatiles}. 
This is the only metatile of length 1. The simplest metatile
of length 2 is the bifence ($f^2$) which is two interlocking fences.
Unlike the case of tiling with squares and $(\frac12,1)$-fences, the
filled fence, which is obtained by filling the gap in the fence by a
half-square and is denoted by $fh$, is not a metatile since it has
length $\frac32$. However, by adding a half-square to one end we form
length-2 metatiles ($hfh$ and $fhh$). We may also create metatiles of
length $2(j+1)$ by adding $j$ bifences between a half-square and
filled fence. Concatenating two filled fences generates a length-3
metatile ($fhfh$) and inserting $j$ bifences between them generates a
metatile of length $2j+3$. Inserting $j$ bifences between two
half-squares generates a metatile of length $2j+1$. Hence there are
two metatiles of length $l$ for all $l\geq3$.

\begin{figure} 
\begin{center}
\includegraphics[width=14cm]{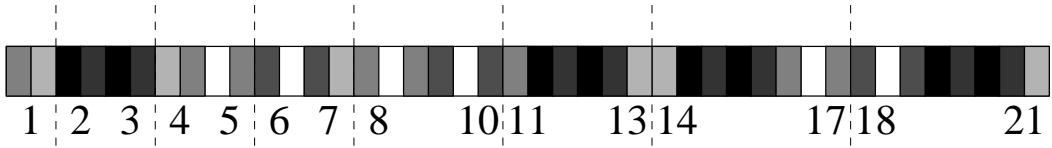}
\end{center}
\caption{A 21-board tiled with all possible metatiles of length less
  than 5: two half-squares $h^2$
  (cell 1), a bifence $f^2$ (cells 2--3), $hfh$ (cells 4--5), $fhh$
  (cells 6--7), $fhfh$ (cells 8--10), $hf^2h$ (cells 11--13), $hf^2fh$
  (cells 14--17), $fhf^2h$ (cells 18--21).}
\label{f:metatiles}
\end{figure}

We will refer to a metatile containing both $h$ and $f$ as being
\textit{mixed}. Notice that there are two types of mixed metatile of
each length larger than 1. This fact is used in the proofs of
Identities \ref{I:n+2} and \ref{I:2n+1}. An $h$ is said to be
\textit{captured} if it lies between the posts of a single fence, and
\textit{free} otherwise. These notions are used in the proof of
Identity~\ref{I:cass}. A bifence is said to be free if it
is not part of a larger metatile.

\section{The correspondence between numbers of tilings and the squares
of the Fibonacci numbers}

Let $A_n$ be the number of ways to tile an $n$-board using half-squares and
fences.
\begin{lemma}\label{L:Anidn}
\begin{equation}\label{e:Anidn}
A_{n}=\delta_{n,0}+A_{n-1}+3A_{n-2}+2\sum_{i=3}^nA_{n-i},
\end{equation} 
where $\delta_{i,j}$ is 1 if $i=j$ and 0 otherwise, and $A_{n}=0$ for $n<0$. 
\end{lemma}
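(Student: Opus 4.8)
The plan is to give a combinatorial proof by conditioning on the leftmost metatile, exactly as one does for the standard Fibonacci tiling recurrences in \cite{BQ=03}. The key structural fact, established in Section~2, is that every tiling of an $n$-board by half-squares and fences decomposes \emph{uniquely} into metatiles: reading from the left, the first metatile is the shortest initial block of tiles that covers a whole number of cells, and peeling it off leaves a genuine tiling of a shorter board. Thus for $n\geq1$ every tiling is obtained in exactly one way by choosing a first metatile of some length $i$ with $1\leq i\leq n$ and then tiling the remaining $(n-i)$-board in one of $A_{n-i}$ ways.

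First I would record the metatile census from Section~2 (see also Fig.~\ref{f:metatiles}): there is a single metatile of length $1$ (namely $h^2$), there are three metatiles of length $2$ (the bifence $f^2$ together with the two mixed metatiles $hfh$ and $fhh$), and there are exactly two metatiles of every length $l\geq3$. Feeding these counts into the decomposition above gives, for each admissible first-metatile length $i$, a contribution of $A_{n-1}$ when $i=1$, of $3A_{n-2}$ when $i=2$, and of $2A_{n-i}$ when $i\geq3$. Summing over $i$ yields
\begin{equation*}
A_n=A_{n-1}+3A_{n-2}+2\sum_{i=3}^n A_{n-i}\qquad(n\geq1),
\end{equation*}
where terms with negative index vanish by the convention $A_n=0$ for $n<0$.

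Finally I would account for the base case. When $n=0$ there is no first metatile, but the empty tiling still contributes one to the count, so $A_0=1$; all three metatile terms and the sum vanish at $n=0$, so the Kronecker delta $\delta_{n,0}$ supplies precisely this value and makes the displayed recurrence valid for all $n\geq0$, giving \eqref{e:Anidn}.

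I do not expect a serious obstacle here: once Section~2 guarantees both the uniqueness of the metatile decomposition and the correct count of metatiles by length, the identity is a direct application of the first-metatile argument. The only point requiring care is the bookkeeping at the boundary---confirming that the $i=1$ and $i=2$ terms really carry the anomalous coefficients $1$ and $3$ rather than the generic $2$, and that the empty-board case is correctly captured by $\delta_{n,0}$ rather than being absorbed into the sum.
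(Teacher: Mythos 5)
Your proof is correct and is essentially the paper's own argument: the paper conditions on the \emph{last} metatile rather than the leftmost, but since the metatile census by length is the same from either end (one of length $1$, three of length $2$, two of each length $l\geq3$), the two versions are mirror images of the same decomposition argument. Your handling of the $\delta_{n,0}$ term via the empty tiling matches the paper's convention $A_0=1$.
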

\begin{proof}
Following \cite{BHS03,EA15}, we
condition on the last metatile. If the last metatile is of length $l$
there will be $A_{n-l}$ ways to tile the remaining $n-l$ cells.
There is one metatile of length 1, three of length 2, and two metatiles of
length $l$ for each $l\geq3$.
If $n=l$ there is exactly one tiling corresponding to that final
metatile so we make $A_0=1$. There is no way to tile an $n$-board if
$n<l$ and so $A_{n<0}=0$.
\end{proof}
Note that the inclusion of the initial value of the sequence via the
$\delta_{n,0}$ term is used to simplify the derivations of 
recurrence relations obtained from recurrence relations with sums
containing arbitrarily many terms by subtracting the recursion
relation for $A_{n-L}$ for some $L$.
Without it, one would have to consider the $n=L$ case
separately. 

\begin{thm}\label{T:bij}
The number of ways to tile an $n$-board using half-squares and
$(\frac12,\frac12)$-fences is $F_{n+1}^2$.
\end{thm}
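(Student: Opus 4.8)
The plan is to convert the summatory recurrence of Lemma~\ref{L:Anidn} into a fixed-order linear recurrence, show that the sequence $F_{n+1}^2$ obeys the very same recurrence, and then check enough initial values. This is the most direct route now that the Lemma is in hand; a purely bijective argument (matching half-square/fence tilings with ordered pairs of square-and-domino tilings of the $n$-board, which $F_{n+1}^2$ counts) is also conceivable but looks harder to make clean, so I would keep it in reserve.

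First I would eliminate the sum in \eqref{e:Anidn}. Writing \eqref{e:Anidn} for $A_n$ and for $A_{n-1}$ and subtracting, the common tail $2(A_{n-4}+A_{n-5}+\cdots+A_0)$ cancels and only the single summand $2A_{n-3}$ survives; for $n\geq 2$ the Kronecker deltas vanish, so
\begin{equation*}
A_n-A_{n-1}=A_{n-1}+(3-1)A_{n-2}+(2-3)A_{n-3}=A_{n-1}+2A_{n-2}-A_{n-3},
\end{equation*}
that is,
\begin{equation*}
A_n=2A_{n-1}+2A_{n-2}-A_{n-3}\qquad(n\geq 2),
\end{equation*}
using the convention $A_{<0}=0$. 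The only care needed here is in tracking the $\delta_{n,0}$ and $\delta_{n-1,0}$ terms so as to fix the range of validity of this third-order recurrence.

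Next I would show that $a_n:=F_{n+1}^2$ satisfies the same recurrence, which I expect to be the one genuinely non-mechanical step. The quickest justification uses Binet's formula: with $\phi,\psi$ the roots of $x^2=x+1$ one has $\phi\psi=-1$ and $\phi^2+\psi^2=3$, so $F_m^2=(\phi^{2m}+\psi^{2m}-2(-1)^m)/5$ is a linear combination of the geometric sequences with ratios $\phi^2$, $\psi^2$, and $-1$. These ratios are exactly the roots of
\begin{equation*}
(x-\phi^2)(x-\psi^2)(x+1)=(x^2-3x+1)(x+1)=x^3-2x^2-2x+1,
\end{equation*}
whence $F_m^2=2F_{m-1}^2+2F_{m-2}^2-F_{m-3}^2$; shifting the index gives precisely $a_n=2a_{n-1}+2a_{n-2}-a_{n-3}$. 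Equivalently, this is the standard squared-Fibonacci identity $F_{n+1}^2=2F_n^2+2F_{n-1}^2-F_{n-2}^2$, which one may simply cite from \cite{BQ=03}.

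Finally I would check three base cases, since the recurrence has order three. From \eqref{e:Anidn} (or directly from the tilings) $A_0=1$, $A_1=1$, and $A_2=1+3=4$, while $F_1^2=1$, $F_2^2=1$, and $F_3^2=4$. Since $A_n$ and $F_{n+1}^2$ agree at $n=0,1,2$ and satisfy the same third-order recurrence for $n\geq 3$, an immediate induction yields $A_n=F_{n+1}^2$ for all $n\geq 0$, which is the assertion of the theorem.
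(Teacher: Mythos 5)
Your proposal is correct and follows essentially the same route as the paper: both subtract the shifted form of \eqref{e:Anidn} to obtain $A_n=2A_{n-1}+2A_{n-2}-A_{n-3}$ and then invoke the fact that $F_{n+1}^2$ satisfies this recurrence (the paper cites Identity~30 of \cite{BQ=03}, where you derive it via Binet's formula) together with matching initial values. Your explicit verification via the characteristic polynomial $(x^2-3x+1)(x+1)$ is a fine substitute for the citation, and your observation that the recurrence already holds at $n=2$ under the convention $A_{<0}=0$ is a harmless refinement of the paper's $n\ge3$ statement.
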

\begin{proof}
Subtracting \eqref{e:Anidn} with $n$ changed to $n-1$ from
\eqref{e:Anidn} gives
\begin{equation}\label{e:An} 
A_n=\delta_{n,0}-\delta_{n,1}+2A_{n-1}+2A_{n-2}-A_{n-3},
\end{equation} 
i.e.,  $A_n=2A_{n-1}+2A_{n-2}-A_{n-3}$
for $n\ge3$, where $A_0=A_1=1$, $A_2=4$.
The Fibonacci numbers squared obey \eqref{e:An} with
$A_n=F_{n+1}^2$ (see Identity~30 in \cite{BQ=03}).  
\end{proof}

\section{Identities}

\begin{idn}\label{I:sum} 
\begin{equation}\label{e:sumidn}
F_{n}^2=F_{n-1}^2+3F_{n-2}^2+2\sum_{i=3}^nF_{n-i}^2, \quad n\ge2.
\end{equation} 
\end{idn}
\begin{proof}
It follows from Lemma~\ref{L:Anidn} and Theorem~\ref{T:bij}. 
\end{proof}

\begin{idn}\label{I:n+2}
\begin{equation}\label{e:n+2} 
F_{n+3}^2-1=\sum_{k=0}^n\left\{3F_{k+1}^2+2\sum_{i=1}^kF_i^2\right\},
\quad n\ge0.
\end{equation} 
\end{idn}
\begin{proof}
How many ways are there to tile an $(n+2)$-board using at least 1 fence? Answer
1: $A_{n+2}-1$ since this corresponds to all tilings except the
all-$h$ tiling.  Answer 2: condition on the location of the last
fence. Suppose this fence lies on cells $k+1$ and $k+2$
($k=0,\ldots,n$).  Either there is a bifence covering these cells and
so there are $A_k$ ways to tile the remaining cells, or the cells are
at the end of a mixed metatile and so there are
$2(A_{k+2-2}+A_{k+2-3}+\cdots+A_0)$ ways to tile the remaining
cells. Hence, equating the two answers,
\[
A_{n+2}-1=\sum_{k=0}^n\left\{3A_k+2(A_0+A_1+\cdots+A_{k-1})\right\}.
\]
The identity then follows from Theorem~\ref{T:bij}.
\end{proof}

\begin{idn}\label{I:2n+1}
\begin{equation}\label{e:2n+1} 
F_{2n+2}^2=F_1^2+\sum_{k=1}^n\left\{F_{2k+1}^2+2\sum_{i=1}^{2k}F_i^2\right\},
\quad n\ge0.
\end{equation} 
\end{idn}
\begin{proof}
How many ways are there to tile an $(2n+1)$-board? Answer 1: $A_{2n+1}$.  Answer
2: an odd-length board must have at least one $h$ and the final $h$
must be on an odd cell. Condition on the location of the last
$h$. Suppose that the last $h$ is in cell $2k+1$
($k=0,\ldots,n$). Either it is part of $h^2$ and so there are $A_{2k}$
ways to tile the remaining cells, or it is part of a mixed metatile
and so there are $2(A_{2k+1-2}+A_{2k+1-3}+\cdots+A_0)$ ways to tile
the remaining cells. In the latter case, evidently, $k$ cannot be
zero. Hence
\[
A_{2n+1}=\sum_{k=0}^nA_{2k}+2\sum_{k=1}^n(A_0+A_1+\cdots+A_{2k-1}).
\] 
The identity then follows from Theorem~\ref{T:bij}.
\end{proof}

\begin{lemma}\label{L:Sn}
If $S_n$ is the number of ways to tile an $n$-board using half-squares and
$(\frac12,\frac12)$-fences such that no free bifences occur then
\begin{equation}\label{e:Sn}
S_n=2S_{n-1}+S_{n-2}, \quad n\ge2,
\end{equation} 
where $S_0=S_1=1$.
\end{lemma}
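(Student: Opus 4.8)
The plan is to follow the same two-step strategy used for Lemma~\ref{L:Anidn} and Theorem~\ref{T:bij}. The first observation is that forbidding \emph{free} bifences amounts to deleting precisely one metatile from the allowed set, namely the standalone bifence $f^2$: every other metatile remains permitted, since a bifence that sits inside a longer metatile is by definition not free. In particular all metatiles of length $l\ge3$ still occur, and among the length-2 metatiles we keep $hfh$ and $fhh$ but discard $f^2$.

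With this in hand, I would condition on the last metatile of an $S$-tiling exactly as in the proof of Lemma~\ref{L:Anidn}. The only change is that the number of length-2 metatiles drops from three to two, so the sum-recurrence becomes
\[
S_n=\delta_{n,0}+S_{n-1}+2S_{n-2}+2\sum_{i=3}^nS_{n-i},
\]
with $S_n=0$ for $n<0$. Next I would apply the subtraction trick emphasized in the remark after Lemma~\ref{L:Anidn}: subtracting this same relation with $n$ replaced by $n-1$ collapses the tail $2\sum_{i=3}^nS_{n-i}-2\sum_{i=3}^{n-1}S_{n-1-i}$ to the single term $2S_{n-3}$, which then cancels against $-2S_{n-3}$, leaving
\[
S_n=\delta_{n,0}-\delta_{n,1}+2S_{n-1}+S_{n-2}.
\]
For $n\ge2$ both Kronecker deltas vanish and we obtain the asserted recurrence $S_n=2S_{n-1}+S_{n-2}$.

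The base values follow by inspection: the empty board gives $S_0=1$ and the $1$-board admits only the tiling $h^2$, so $S_1=1$ (and as a consistency check $S_2=2S_1+S_0=3$ counts $h^2h^2$, $hfh$, and $fhh$).

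I expect the only genuinely delicate point to be the bookkeeping in the first step: one must be sure that excluding free bifences removes $f^2$ and nothing else, so that every longer metatile still contributes with the same multiplicity as in Lemma~\ref{L:Anidn}. Once that is settled, the remainder is the identical mechanical subtraction already carried out for \eqref{e:An}, and no new difficulty arises.
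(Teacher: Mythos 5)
Your proof is correct and takes essentially the same approach as the paper: you condition on the last metatile with the standalone bifence $f^2$ excluded (your sum-recurrence is exactly the paper's \eqref{e:Snidn} with the length-$2$ term written out separately), then apply the same subtraction trick to reach $S_n=\delta_{n,0}-\delta_{n,1}+2S_{n-1}+S_{n-2}$. Your key bookkeeping observation---that forbidding free bifences removes only $f^2$ from the metatile list, since a bifence inside a longer metatile is by definition not free---is precisely the point the paper relies on as well.
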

\begin{proof}
Conditioning on the last metatile (which can be any metatile except a
bifence) gives
\begin{equation}\label{e:Snidn} 
S_n=\delta_{0,n}+S_{n-1}+2\sum_{j=2}^n S_{n-j}
\end{equation}
Subtracting \eqref{e:Snidn} with $n$ replaced by
$n-1$ from \eqref{e:Snidn} gives 
$S_n=\delta_{0,n}-\delta_{1,n}+2S_{n-1}+S_{n-2}$ which is equivalent to
\eqref{e:Sn}. 
\end{proof}

$S_{n\ge0}=1,1,3,7,17,41,99,239,577,1393,\ldots$ is sequence
A001333 in \cite{Slo-OEIS}.

\begin{idn}\label{I:nS} 
\begin{equation}\label{e:nS}
F_{n+1}^2=S_n+\sum_{k=2}^nF^2_{k-1}S_{n-k}
\end{equation}  
\end{idn}
\begin{proof}
How many tilings of an $n$-board contain at least one free bifence?
Answer 1: $A_n-S_n$. Answer 2: condition on the location of the last
free bifence.  The number of tilings when this final free bifence lies
on cells $k-1$ and $k$ (for $k=2,\ldots,n$) is
$A_{k-2}S_{n-k}$. Summing over all possible $k$ and equating the two
answers gives
\[
A_n-S_n=\sum_{k=2}^nA_{k-2}S_{n-k}
\]
and the identity follows from Theorem~\ref{T:bij}.
\end{proof}

\begin{lemma}\label{L:Cn}
If $C_n$ is the number of ways to tile an $n$-board using half-squares and
$(\frac12,\frac12)$-fences such that no bifences occur then
\begin{equation}\label{e:Cn}
C_n=C_{n-1}+2C_{n-2}+C_{n-3}, \quad n\ge3,
\end{equation} 
where $C_0=C_1=1$ and $C_2=3$.
\end{lemma}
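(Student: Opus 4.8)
The plan is to condition on the last metatile, exactly as in the proofs of Lemma~\ref{L:Anidn} and Lemma~\ref{L:Sn}, but now restricted to those metatiles that contain no bifence. The crux is therefore a preliminary classification: I would first determine precisely which metatiles are bifence-free. Recalling the description of the metatiles in Section~2, every metatile is either the bifence $f^2$ itself or is obtained by inserting $j\ge0$ bifences into one of four ``skeletons'': two half-squares, two filled fences, or a half-square and a filled fence in either order (the $j=0$ cases of these four skeletons being $h^2$, $fhfh$, $hfh$, and $fhh$). Inserting even a single bifence into any skeleton introduces a bifence, and $f^2$ is a bifence, so the complete list of bifence-free metatiles is precisely $h^2$ (length~1), $hfh$ and $fhh$ (length~2), and $fhfh$ (length~3).

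With this list in hand, I would observe that any tiling counted by $C_n$ is a concatenation of bifence-free metatiles, so conditioning on the final metatile gives a contribution $C_{n-1}$ from an $h^2$, a contribution $2C_{n-2}$ from the two length-2 mixed metatiles $hfh$ and $fhh$, and a contribution $C_{n-3}$ from $fhfh$. Summing these yields
\[
C_n=C_{n-1}+2C_{n-2}+C_{n-3}
\]
for all $n\ge3$, since for $n\ge3$ each of the four metatiles leaves a nonnegative number of cells still to be tiled. The initial conditions follow by direct enumeration: $C_0=1$ (the empty tiling), $C_1=1$ (only $h^2$), and $C_2=3$ (namely $h^2h^2$, $hfh$, and $fhh$).

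I expect the only genuine obstacle to be the classification step, i.e.\ convincing the reader that $fhfh$ really is the unique bifence-free metatile of length at least~$3$ and that no longer bifence-free metatile is lurking. This reduces to noting that the constructions in Section~2 exhaust all metatiles and that each such construction of length $\ge3$, apart from the single concatenation of two filled fences, contains at least one inserted bifence. Once that is settled, the remainder is the routine last-metatile conditioning already used twice earlier in the paper, and, unlike those cases, the finite metatile list makes the $\delta$-term bookkeeping unnecessary.
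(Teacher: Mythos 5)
Your proposal is correct and follows essentially the same route as the paper: the paper's proof likewise conditions on the last metatile, noting it can only be $h^2$, $fhh$, $hfh$, or $fhfh$, and your classification of the bifence-free metatiles together with the direct check of $C_0$, $C_1$, $C_2$ matches the paper's argument (which handles the base case via a $\delta_{0,n}$ term instead). The extra care you take in verifying that the constructions of Section~2 exhaust all metatiles is a sound elaboration of what the paper leaves implicit, not a different method.
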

\begin{proof}
Conditioning on the last metatile which can only be $h^2$, $fhh$, $hfh$,
or $fhfh$ gives $C_n=\delta_{0,n}+C_{n-1}+2C_{n-2}+C_{n-3}$ which is
equivalent to \eqref{e:Cn}.
\end{proof}

$C_{n\ge0}=1,1,3,6,13,28,60,129,277,595,\ldots$ is sequence
A002478 in \cite{Slo-OEIS} and consists of the even terms of
Narayana's cows sequence (A000930 in \cite{Slo-OEIS}).

\begin{idn}\label{I:nC} 
\begin{equation}\label{e:nC}
F_{n+1}^2=C_n+\sum_{k=2}^nF^2_{k-1}C_{n-k}
+\sum_{k=3}^n\sum_{l=3}^k(2-\delta_{l,3})F^2_{k-l+1}C_{n-k}.
\end{equation}  
\end{idn}
\begin{proof}
How many tilings of an $n$-board contain at least one bifence?  Answer
1: $A_n-C_n$. Answer 2: condition on the location of the last metatile
containing a bifence. If the metatile is just a bifence, it can occupy
cells $k-1$ and $k$ ($k=2,\ldots,n$). The number of tilings in
this case is $A_{k-2}C_{n-k}$. Otherwise the metatile can occupy cells
$k-l+1$ to $k$ ($k=l,\ldots,n$) 
where $l$ is the length of the metatile. The number of tilings for a
metatile of length $l$ for a given $k$ is $A_{k-l}C_{n-k}$.
There is one $l=3$
metatile containing a bifence and two for each $l>3$. Summing over the
possible $l$ and $k$ and equating the two answers gives
\[
A_n-C_n=\sum_{k=2}^nA_{k-2}C_{n-k}
+\sum_{k=3}^n\sum_{l=3}^k(2-\delta_{l,3})A_{k-l}C_{n-k}.
\]
The identity then follows from Theorem~\ref{T:bij}.
\end{proof}

\begin{lemma}\label{L:Tn}
If $T_n$ is the number of ways to tile an $n$-board using half-squares and
$(\frac12,\frac12)$-fences such that no even-length metatiles occur then
\begin{equation}\label{e:Tn}
T_n=T_{n-1}+T_{n-2}+T_{n-3}, \quad n\ge3, 
\end{equation} 
where $T_0=T_1=T_2=1$.
\end{lemma}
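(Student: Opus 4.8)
The plan is to condition on the last metatile, as in the proofs of Lemmas~\ref{L:Anidn} and~\ref{L:Sn}, but first I would pin down exactly which metatiles survive the restriction. Recall from Section~2 that the metatiles of even length are the three of length~2 ($f^2$, $hfh$, $fhh$) together with the two of each even length $\ge4$ formed by inserting bifences between a half-square and a filled fence. Discarding all of these leaves $h^2$ (the unique metatile of length~1) and, for each odd length $l\ge3$, the two metatiles $h(f^2)^{(l-1)/2}h$ and $fh(f^2)^{(l-3)/2}fh$. Hence every admissible metatile has odd length: exactly one of length~1 and exactly two of each odd length $l\ge3$.

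Conditioning on the last metatile and summing over its possible lengths then gives
\begin{equation*}
T_n=\delta_{0,n}+T_{n-1}+2\sum_{m\ge1}T_{n-2m-1},
\end{equation*}
with $T_{n<0}=0$, where the $\delta_{0,n}$ term records the empty tiling, just as in \eqref{e:Anidn} and \eqref{e:Snidn}. The tail $T_{n-3}+T_{n-5}+T_{n-7}+\cdots$ advances in steps of two, so instead of differencing against the $n-1$ relation (as for $A_n$ and $S_n$) I would subtract the same relation with $n$ replaced by $n-2$. The two tails then cancel apart from the single term $2T_{n-3}$, which yields
\begin{equation*}
T_n=\delta_{0,n}-\delta_{0,n-2}+T_{n-1}+T_{n-2}+T_{n-3}.
\end{equation*}
For $n\ge3$ both Kronecker deltas vanish and this is exactly \eqref{e:Tn}. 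I would then check the initial values directly: $T_0=1$ (empty tiling), $T_1=1$ ($h^2$), and $T_2=1$ (two copies of $h^2$, since the remaining admissible metatiles have length $\ge3$ and do not fit).

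Most of this is routine once the accounting is set up, so the main obstacle is the classification step: one must verify that deleting the even-length metatiles really does leave one metatile of length~1 and two of each odd length $\ge3$, and in particular that no surviving metatile has even length. The only genuine departure from the earlier lemmas is the decision to difference against the $n-2$ relation rather than the $n-1$ relation, which is forced because all the surviving lengths share the same parity.
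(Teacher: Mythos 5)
Your proof is correct and follows essentially the same route as the paper's: you condition on the last metatile (one admissible metatile of length~1, two of each odd length $\ge3$), obtain the same sum relation as \eqref{e:Tnidn}, and difference against the $n-2$ relation to get $T_n=\delta_{0,n}-\delta_{2,n}+T_{n-1}+T_{n-2}+T_{n-3}$. The only difference is that you spell out the classification of the surviving odd-length metatiles, which the paper asserts directly from Section~2.
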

\begin{proof}
Condition on the last metatile of which there is only one of length 1 but
two of every other odd length. Thus
\begin{equation}\label{e:Tnidn} 
T_n=\delta_{0,n}+T_{n-1}
+2\!\!\!\!\!\sum_{j=1}^{\floor{(n-1)/2}}\!\!\!\!\!T_{n-1-2j}
\end{equation}
with $T_{n<0}=0$. Subtracting \eqref{e:Tnidn} with $n$ replaced by
$n-2$ from \eqref{e:Tnidn} gives 
$T_n=\delta_{0,n}-\delta_{2,n}+T_{n-1}+T_{n-2}+T_{n-3}$
which is equivalent to
\eqref{e:Tn}. 
\end{proof}

$T_{n\ge0}=1,1,1,3,5,9,17,31,57,105,\ldots$ is a Tribonacci sequence
(A000213 in \cite{Slo-OEIS}).

\begin{idn}\label{I:nT} 
\begin{equation}\label{e:nT}
F_{n+1}^2=T_n
+\sum_{k=2}^n\sum_{j=1}^{\floor{k/2}}(2+\delta_{j,1})F^2_{k-2j+1}T_{n-k}.
\end{equation}  
\end{idn}
\begin{proof}
How many tilings of an $n$-board contain at least one even-length
metatile?  
Answer 1: $A_n-T_n$. Answer 2: condition on the location of the last
even-length metatile. The metatile can occupy cells
$k-2j+1$ to $k$ ($k=2j,\ldots,n$) 
where $2j$ is the length of the metatile. The number of tilings for a
metatile of length $2j$ for a given $k$ is $A_{k-2j}T_{n-k}$.
There are three metatiles of length 2 and two for each $j>1$.
Summing over the
possible $j$ and $k$ and equating the two answers gives
\[
A_n-T_n=\sum_{k=2}^n\sum_{j=1}^{\floor{k/2}}(2+\delta_{j,1})A_{k-2j}T_{n-k}.
\]
The identity then follows from Theorem~\ref{T:bij}.
\end{proof}

\begin{idn}\label{I:cass} 
\begin{equation}\label{e:cass}
F_{n+1}^2=3F_n^2-F^2_{n-1}+2(-1)^n.
\end{equation}  
\end{idn}
\begin{proof}
We find a near bijection between the tilings of an $n$-board and an
$(n-2)$-board and the tilings of three $(n-1)$-boards. There is an
exact bijection between the first $(n-1)$-board and tilings of an
$n$-board that end in $h^2$. There is a bijection $\mathcal{B}_n$
between the tilings of the $n$-board that end in a fence and the
tilings of the second $(n-1)$-board if both boards contain at least
one $h$ (i.e., neither is an all-bifence tiling). If the $n$-board
ends in a bifence, find the final $h$. If the final $h$ is captured,
replace the filled fence it is in by $h$. Otherwise replace the free
$h$ and the bifence to the right of it by a filled fence. This gives
all $h$-containing $(n-1)$-board tilings ending with a fence. The
tilings ending in $h$ are obtained from the $n$-board tilings ending
in a filled fence by replacing that filled fence by an $h$. This
leaves the tilings of the $n$-board that end in a free $h$ (but not
$h^2$). Not counting this final $h$, find the final $h$ in the tiling
(as there must be at least one other $h$) and then obtain the
corresponding $(n-1)$-board by using the same procedure as for
$n$-boards ending in a fence. This generates all tilings of the third
$(n-1)$-board ending in a free $h$. The bijection between the
remaining $h$-containing tilings of this board (i.e., those ending in
a fence) and the all $h$-containing tilings of an $(n-2)$-board is
simply $\mathcal{B}_{n-1}$. When $n$ is even, the $n$ and
$(n-2)$-boards both have an all bifence tiling and so 
$A_{n}+A_{n-2}=3A_{n-1}+2$. When $n$ is odd, the second and third
$(n-1)$-boards have all bifence tilings which do not correspond to any
of the $h$-containing tilings of the $n$ or $(n-2)$-boards and so we must
subtract 2. Thus overall,
\[
A_n+A_{n-2}=3A_{n-1}+2(-1)^n,
\]
and the required identity is obtained from using Theorem~\ref{T:bij}. 
\end{proof}

\section{Discussion}
Clearly, the tiling given here has the same combinatorial interpretation
as tiling an even-length board
with squares and $(1,1)$-fences. However, in that expanded form of
tiling, there are only three metatiles (the square, filled fence, and
bifence) and so the identities we obtain here would not arise so
naturally. The number of tilings of an odd-length board in this case
gives the golden rectangle numbers and so identities relating these to
the Fibonacci numbers squared can also be obtained via combinatorial proof. 
 
It should be noted that the occurrence of an infinite number of
metatiles is not limited to tiling with fences. For example, if one
tiles an $n$-board with half-squares and squares ($S$) then the
metatiles are $hS^jh$ for $j\ge0$ (and the number of such tilings is
$F_{2n+1}$ since it is equivalent to tiling an even length board with
squares and dominoes).


\medskip

\noindent MSC2010:  05A19, 11B39

\end{document}